\theoremstyle{plain}
\newtheorem{prop}{Proposition}
\theoremstyle{definition}
\theoremstyle{remark}
\newcommand{\prob}{\mathsf{P}} 
\newcommand{\E}{\mathsf{E}}
\newcommand{\bel}{\mathsf{bel}}
\newcommand{\pl}{\mathsf{pl}}
\newcommand{\C}{\mathscr{C}}
\renewcommand{\P}{\mathscr{P}}
\newcommand{\unif}{{\sf Unif}}
\newcommand{\nm}{{\sf N}}
\newcommand{\chisq}{{\sf ChiSq}}
\newcommand{\K}{\mathcal{K}}
\newcommand{\RR}{\mathbb{R}}
\newcommand{\U}{\mathscr{U}}
\newcommand{\YY}{\mathbb{Y}}
\newcommand{\UU}{\mathbb{U}}
\renewcommand{\S}{\mathcal{S}}
\renewcommand{\SS}{\mathbb{S}}
\renewcommand{\phi}{\varphi}
\title{Random sets and exact confidence regions}
\author{
Ryan Martin \\
Department of Mathematics, Statistics, and Computer Science \\
University of Illinois at Chicago \\
\url{rgmartin@uic.edu} 
}
\date{\today}
\begin{document}

\maketitle 


\begin{abstract}
An important problem in statistics is the construction of confidence regions for unknown parameters.  In most cases, asymptotic distribution theory is used to construct confidence regions, so any coverage probability claims only hold approximately, for large samples.  This paper describes a new approach, using random sets, which allows users to construct exact confidence regions without appeal to asymptotic theory.  In particular, if the user-specified random set satisfies a certain validity property, confidence regions obtained by thresholding the induced data-dependent plausibility function are shown to have the desired coverage probability.  

\smallskip

\emph{Keywords and phrases:} Coverage probability; inferential model; plausibility function; predictive random set; validity.

\smallskip

\emph{AMS subject classification:} 62F25; 60D05; 62E15.
\end{abstract}

\section{Introduction}
\label{S:intro}

A fundamental problem in statistics is that of constructing confidence regions.  Roughly speaking, a confidence region is a data-dependent subset of the parameter space with the interpretation that, all values inside this subset are ``reasonable'' estimates of the unknown parameter.  The more precise interpretation of confidence regions is based a frequentist notion of coverage probability.  That is, in repeated sampling, the confidence region will contain the true parameter value a specified proportion of the time.  That the confidence region (nearly) hits the target coverage probability is crucial to the validity of the resulting inference: on one hand, if the actual coverage probability is too high, then the confidence regions are likely too large to provide any meaningful notion of uncertainty; on the other hand, if the actual coverage probability is too low, then it is likely that the confidence region has a systematic bias, casting doubt on the accuracy of the results for the data at hand.  Unfortunately, it is rare that a simple and exact confidence region is available; the well-known Student-t confidence interval for a Gaussian mean is one exception.  Typically, an appeal to asymptotic theory is made, and confidence regions are built based on the simpler limiting distribution; confidence regions based on the asymptotic normality of maximum likelihood estimators is one example.  However, with this approach, one must add to any coverage probability claim the caveat ``for sufficiently large sample size.''  Alternatively, numerical methods, such as bootstrap \citep{efrontibshirani1993}, are popular when a direct appeal to asymptotic theory is questionable.  But validity of the bootstrap also depends on large-sample theory, so there are no non-asymptotic coverage probability guarantees for bootstrap confidence regions. 

This paper describes a new approach, resting on a theory of random sets.  The initial step is to establish an association between the observable data, the unknown parameter, and a mostly arbitrary auxiliary variable.  By ``association'' here we mean a suitable representation of the statistical model for the observable data.  Alternatively, the association can be viewed as a sort of compatibility relation among the various inputs. Random sets supported in the auxiliary variable space---called predictive random sets---are propagated, via observed data and the specified statistical model, to random sets in the parameter space.  These random sets in the parameter space are characterized by their belief functions or, alternatively, by their plausibility functions.  These functions also appear in the famous Dempster--Shafer theory \citep{dempster2008, shafer1976}, but the approach described here is different.  It is shown in Section~\ref{S:plausibility} that, under very mild conditions on the user-specified predictive random sets, exact confidence regions can be constructed via a suitable thresholding of the plausibility function.  Moreover, the plausibility regions have a simple but inferentially meaningful interpretation.  

The remainder of the paper is organized as follows.  Section~\ref{S:setup} describes the general statistical problem and defines confidence regions and coverage probability.  Random sets are described in Section~\ref{S:sets}, with a general overview in Section~\ref{SS:overview} and a presentation of the important new concept, namely, predictive random sets, in Section~\ref{SS:prs}.  These sets are the driving force behind the proposed approach.  In Section~\ref{S:plausibility} we first define the plausibility function, which is nothing but a probability calculation relative to the distribution of the predictive random set, along with the corresponding plausibility region.  Then we prove the main result that, under mild conditions on the model itself, if the predictive random set is valid, a property that is easily satisfied, then the corresponding plausibility regions hit the desired coverage probability.  This is a finite sample result, not asymptotic.  Here we find that certain aspects of the formal mathematical theory of random sets leads to a relatively simple statement of the sufficient conditions for this result.  Three illustrative examples, involving models used in reliability theory, are presented in Section~\ref{S:examples}.  Finally, Section~\ref{S:discuss} contains a brief discussion.

\section{Setup and notation}
\label{S:setup}

Let $Y$ be an observable sample, taking values in the sample space $\YY$, with distribution $\prob_{Y|\theta}$ depending on a parameter $\theta$ in a parameter space $\Theta$.  Here $Y$ may be a vector of $n$ (possibly independent) observations, so that $\YY$ is actually a product space, but it is not necessary to be so specific here.  The distribution $\prob_{Y|\theta}$ is called the sampling model, and if the value of $\theta$ were known, then $Y$ could be simulated.  In the present context, the actual $\theta$ is unknown, and the goal is to use data $Y$ to make inference about $\theta$.  

In statistical applications, it is typical to summarize data $Y$ with a statistic $T=T(Y)$.  Just like $Y$, the statistic $T$ has a sampling distribution, denoted by $\prob_{T|\theta}$, which usually depends on $\theta$.  In fact, one usually takes $T$ to be a minimal sufficient statistic for $\theta$, though deviation from this guiding principle is sometimes warranted; see Section~\ref{SS:power}.  The initial reduction of $X$ to a minimal sufficient statistic $T$ can be justified by the standard arguments of Fisher or, more generally, by those of \citet{imcond}.  The classical frequentist approach to statistical inference derives procedures, such as hypothesis tests, based on the sampling distribution of $T$.  In this paper, focus is on confidence regions.  Let $\C_\alpha(T)$ be a $T$-dependent subset of $\Theta$.  For given $\alpha \in (0,1)$, $\C_\alpha(T)$ is called a $100(1-\alpha)$\% confidence region for $\theta$ if 
\begin{equation}
\label{eq:coverage}
\prob_{T|\theta}\{\C_\alpha(T) \ni \theta\} \geq 1-\alpha, \quad \forall \; \theta \in \Theta. 
\end{equation}
The left-hand side of \eqref{eq:coverage} is the coverage probability of $\C_\alpha(T)$, and the definition of confidence region places a condition on this coverage probability, namely, that it must exceed the $1-\alpha$ level.  In words, \eqref{eq:coverage} states that, if the confidence region $\C_\alpha(T)$ is used in many examples involving data $Y \sim \prob_{Y|\theta}$ and statistic $T=T(Y)$, then roughly $100(1-\alpha)$\% of the realized regions will contain the true parameter value.  In other words, if $\alpha$ is small, i.e., $\alpha=0.05$, then $\{\C_\alpha(T) \not\ni \theta\}$ is a rare event with respect to the sampling distribution of $T$.  So, in practice, users will use this ``rare event'' interpretation to justify the conclusion that their calculated confidence region contains the true $\theta$ value.  

Clearly, it is most efficient for the $100(1-\alpha)$\% confidence region to have coverage probability equal $1-\alpha$; this would indicate that, in some sense, its size is just right.  In practice, however, for the sake of analytical or computational convenience, this efficiency is sacrificed.  That is, confidence regions used in practice may not exactly satisfy \eqref{eq:coverage}.  Equality may hold in \eqref{eq:coverage} only as $n \to \infty$, and for finite $n$, the true coverage probability may be above or below the desired $1-\alpha$ level.  It would desirable to have a general way to construct regions $\C_\alpha(T)$ that satisfy \eqref{eq:coverage} for all $n$, especially if equality can be attained in some cases.  The objective of this note is to present and justify such a construction.  

Towards this, we must first digress a bit to introduce an alternative representation of the sampling model $\prob_{T|\theta}$, one that involves an auxiliary variable.  Let $\UU$ be an (arbitrary) auxiliary variable space, equipped with a probability measure $\prob_U$.  Then choose a function $a: \UU \times \Theta \to \Theta$, such that, if $U \sim \prob_U$, then $a(U,\theta) \sim \prob_{T|\theta}$.  In other words, the sampling distribution of $T$ can be characterized by the following recipe:
\begin{equation}
\label{eq:sample}
\text{sample $U \sim \prob_U$ and set $T=a(U,\theta)$}. 
\end{equation}
This is a familiar notion in the context of simulation, e.g., the inverse probability transform, etc, but here the motivation is different.  The function $a$ forges an association between data $T$, parameter $\theta$, and auxiliary variable $U$.  The point is that, once $T=t$ is observed, the very best possible inference about $\theta$ is obtained if and only if the corresponding $U$ value is observed.  As $U$ is, by construction, unobservable, the inference problem can be recast into one of accurately guessing or predicting the unobserved $U$.  This is where random sets will come in handy.

\section{Random sets}
\label{S:sets}

\subsection{A general overview}
\label{SS:overview}

Let $\UU$ be a space and $\S$ a random set, taking values in a collection of subsets of $\UU$, with distribution $\prob_\S$.  There is a rigorous theory for random sets, presented beautifully in, e.g., \citet{molchanov2005}.  Our case here turns out to be a relatively simple special case of the general theory so, for the sake of simplicity, we shall mostly ignore the topological and measure-theoretical technicalities that appear in more formal treatments.  

There are several ways to describe the distribution of the random set.  One approach is via the the plausibility function $\pl(K) = \prob_\S\{\S \cap K \neq \varnothing\}$, $K \subseteq \UU$, also called the capacity functional \citep[][Def.~1.4]{molchanov2005}.  An equivalent quantity, often used in artificial intelligence applications, is the belief function, $\bel(K) = 1-\pl(K^c)$.  The belief function is a formal analogue to the distribution function of a random variable.  One key difference when dealing with random sets, compared to random variables, is that the complementation law generally fails, i.e., $\pl(K) + \pl(K^c) \geq 1$, with equality for all $K$ if and only if $\pl$ is a probability measure if and only if $\S$ is a singleton set with $\prob_\S$-probability~1.  One can discuss belief and plausibility functions without explicitly talking about random sets \citep[e.g.,][]{shafer1979}, though we shall not do so here.  One particularly natural way that random sets and plausibility functions emerge is through a sort of push-forward operation on a probability measure via a set-valued mapping; see, e.g., \citet{dempster1967}, \citet{nguyen1978}, and the discussion following the proof of Proposition~\ref{prop:prs.valid} below.  There is now a wide variety theoretical developments and applications of plausibility functions; see the volume edited by \citet{yagerliu2008}.  In the remainder of this section, we shall focus only on those details that will be important in the sequel.  

Consider now the special case where the random set is nested.  In other words, the collection $\SS \subset 2^\UU$ of possible realizations of $\S$, called the support of $\S$, satisfies:
\begin{equation}
\label{eq:nested}
\text{for any $S,S' \in \SS$, either $S \subseteq S'$ or $S \supseteq S'$}. 
\end{equation}
In this case, the plausibility function corresponding to $\S$ is called \emph{consonant} \citep{shafer1976, shafer1987, aregui.denoeux.2008, balch2012}.  In this case, it is easy to see that 
\[ \pl(K_1 \cap K_2) = \max\{\pl(K_1), \pl(K_2)\}, \quad K_1,K_2 \subseteq \UU. \]
This immediately extends to finite intersections and, with some standard continuity assumptions on $\pl$, to countable intersections.  Here we shall consider a stronger version of continuity, called \emph{condensability} \citep[e.g.,][]{shafer1976, shafer1987, nguyen1978}.  That is, the plausibility function is condensable if, for any upward net $\K$ of subsets in $\UU$, 
\begin{equation}
\label{eq:condense}
\pl\Bigl( \bigcup_{K \in \K} K \Bigr) = \sup_{K \in \K} \pl(K); 
\end{equation}
by an upward net we mean a collection of sets with the property that, for any $K_1$ and $K_2$ in $\K$, there exists $K_3$ in $\K$ such that $K_3 \supset K_1 \cup K_2$.  That this generalizes the usual notion of continuity of set functions is clear.  

Consonance and condensability together imply that the plausibility function is fully characterized \citep[see][Sec.~V.G]{shafer1987} by the contour function, given by
\begin{equation}
\label{eq:contour}
f_\S(u) = \prob_\S\{ \S \ni u \}, \quad u \in \UU, 
\end{equation}
i.e., the probability that the random set $\S$ catches the fixed point $u \in \UU$.  As this is an ordinary function, not a set function, it will be easier to work with than the full plausibility function.  That this function captures the entire plausibility function can be seen by the formula $\pl(K) = \sup_{u \in K} f_\S(u)$, a special case of \eqref{eq:condense}.  As we shall see in the next subsection, nested random sets, together with their corresponding contour functions \eqref{eq:contour} play an important role in this new theory.

\subsection{Predictive random sets}
\label{SS:prs}

In their investigations into the use of Dempster--Shafer theory for statistical inference, \citet{mzl2010} and \citet{zl2010} observe that the corresponding belief functions have proper calibration properties only for certain classes of assertions or hypotheses.  To rectify this mis-calibration, the previous authors argue that the Dempster--Shafer focal elements need to be enlarged, and that this can be accomplished by using what are called predictive random sets.  This combination of predictive random sets with the Dempster--Shafer theory of belief functions provides the mathematical backbone of a new approach, the so-called inferential model (IM) approach; for the complete details, see \citet{imbasics, imbasics.c}.  Here and in Section~\ref{S:plausibility}, we shall review this general theory with an emphasis on the construction of exact confidence regions. 

Given the auxiliary space $\UU$, equipped with a $\sigma$-algebra $\U$ and probability measure $\prob_U$, let $\SS$ be a collection of $\prob_U$-measurable subsets of $\UU$, assumed to contain $\varnothing$ and $\UU$.  The collection $\SS$ will serve as the support for the predictive random set.  To avoid measurability issues, assume that $\U$ contains all closed sets, and that all sets $S \in \SS$ are closed; without loss of generality, assume that $\SS$ contains both $\varnothing$ and $\UU$.  Write $\S$ for the predictive random set, $\prob_\S$ for its distribution, and $f_\S(u)$ for the corresponding contour function \eqref{eq:contour}.  An apparently new concept in the random set theory is that of validity.  That is, the predictive random set $\S$ is \emph{valid} if $f_\S(U)$ is stochastically no smaller than $\unif(0,1)$ when $U \sim \prob_U$.  It will be shown in Section~\ref{S:plausibility} that validity of the predictive random set leads to confidence regions with exact coverage probabilities. 

Here, the interesting question is how to construct a predictive random set that satisfies this validity criterion.  The answer is surprisingly simple.  Take $\S$ to be nested, so that its support $\SS$ satisfies \eqref{eq:nested}, and assume its plausibility function is condensable.  As discussed in the previous subsection, this implies that the contour function $f_\S$ fully characterizes the distribution $\prob_\S$.  Now, since validity implicitly requires some connection between $\prob_\S$ and $\prob_U$, our new condition should forge this connection.  Indeed, we shall consider $\S$ with contour functions $f_\S$ that satisfy
\begin{equation}
\label{eq:natural.measure}
f_\S(u) = 1-\sup_{S \in \SS: S \not\ni u} \prob_U(S), \quad u \in \UU. 
\end{equation}
We can now prove that these conditions are sufficient for validity.  

\begin{prop}
\label{prop:prs.valid}
Suppose $\S$ is nested with condensable plausibility function.  If its contour function satisfies \eqref{eq:natural.measure}, then it is valid.  
\end{prop}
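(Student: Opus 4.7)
The plan is to verify validity directly from its definition: one needs $\prob_U\{f_\S(U) \leq \alpha\} \leq \alpha$ for every $\alpha \in (0,1)$. Writing $\tau(u) := \sup\{\prob_U(S) : S \in \SS,\, u \notin S\}$, which equals $1 - f_\S(u)$ by \eqref{eq:natural.measure}, the desired inequality is equivalent to
\[
\prob_U\{\tau(U) \leq t\} \geq t \quad \text{for all } t \in [0,1],
\]
i.e., $\tau(U)$ is stochastically dominated by $\unif(0,1)$.

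The first move is a tautological rewriting: $\tau(u) \leq t$ says precisely that every $S \in \SS$ with $\prob_U(S) > t$ must contain $u$, so
\[
\{u \in \UU : \tau(u) \leq t\} \;=\; \bigcap_{S \in \mathcal{F}_t} S, \qquad \mathcal{F}_t := \{S \in \SS : \prob_U(S) > t\}.
\]
The main task is to show this (a priori uncountable) intersection has $\prob_U$-measure at least $t$. Nesting makes $\mathcal{F}_t$ totally ordered by inclusion, and for any two nested $S_1, S_2 \in \SS$ the inequality $\prob_U(S_1) < \prob_U(S_2)$ forces $S_1 \subsetneq S_2$. I would then pick a decreasing sequence $\{S_n\} \subseteq \mathcal{F}_t$ with $\prob_U(S_n) \downarrow r := \inf_{\mathcal{F}_t} \prob_U \geq t$; continuity of $\prob_U$ from above gives $\prob_U(\bigcap_n S_n) = r$. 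Any other $S \in \mathcal{F}_t$ with $\prob_U(S) > r$ strictly exceeds $\prob_U(S_n)$ for some $n$, hence $S_n \subseteq S$ and so $\bigcap_n S_n \subseteq S$; if $r$ is attained, the minimal attainer plays the same role and lies inside every element of $\mathcal{F}_t$. Either way $\bigcap_{S \in \mathcal{F}_t} S$ is measurable (it coincides, up to a $\prob_U$-null set, with the countable intersection $\bigcap_n S_n$) and has $\prob_U$-measure at least $r \geq t$, which is the claim.

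The hard part is precisely this reduction of the uncountable intersection defining $\{\tau(U) \leq t\}$ to a countable one amenable to standard continuity of $\prob_U$; this is where nesting does the real work, via the observation that the order by inclusion and the order by $\prob_U$-measure are aligned along the chain, so a countable cofinal sequence captures the full intersection up to a $\prob_U$-null set. Condensability of the plausibility function enters the hypothesis in order to guarantee that the distribution $\prob_\S$ of the random set is completely determined by the contour function $f_\S$ through \eqref{eq:natural.measure}, but it is not itself invoked in the probability calculation above.
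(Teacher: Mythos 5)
Your proof is correct and follows essentially the same route as the paper's: both identify the relevant sub-level/super-level set of the contour function with an intersection of support sets (the paper uses $S_\alpha = \bigcap\{S \in \SS : \prob_U(S) \geq 1-\alpha\}$ and shows $\{f_\S \leq \alpha\} = S_\alpha^c$; you use $\bigcap\{S : \prob_U(S) > t\} = \{f_\S \geq 1-t\}$) and then bound that intersection's $\prob_U$-measure using nestedness. The only difference is that you spell out the reduction of the uncountable intersection to a countable cofinal one via continuity of $\prob_U$, a step the paper simply asserts (relying on closedness of the sets in $\SS$ for measurability), so your write-up is, if anything, slightly more careful.
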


\begin{proof}
Pick any $\alpha \in (0,1)$.  By \eqref{eq:natural.measure}, $f_\S(u) \leq \alpha$ if and only if there exists $S \in \SS$ with $\prob_U(S) \geq 1-\alpha$ and $S \not\ni u$.  Let $S_\alpha = \bigcap \{S \in \SS: \prob_U(S) \geq 1-\alpha\}$, the smallest $S$ with $\prob_U$-probability at least $1-\alpha$; then $f_\S(u) \leq \alpha$ if and only if $u \not\in S_\alpha$.  Since all sets in $\SS$ are closed, $S_\alpha$ is closed and, hence, $\prob_U$-measurable, with $\prob_U(S_\alpha) \geq 1-\alpha$.  Then $\prob_U\{f_\S(U) \leq \alpha\} = \prob_U(S_\alpha^c) \leq \alpha$.  This holds for all $\alpha$, so $f_\S(U)$ is stochastically no smaller than $\unif(0,1)$, and the claimed validity holds.  
\end{proof}

Nested predictive random sets are simple to construct.  For example, suppose $\prob_U$ is a $\unif(0,1)$ distribution and define a predictive random set $\S$ given by 
\begin{equation}
\label{eq:default.prs}
\S = \{u: |u-\tfrac12| \leq |U-\tfrac12|\}, \quad \text{with} \quad U \sim \prob_U. 
\end{equation}
Then the support $\SS$ of $\S$ contains all symmetric intervals $S$ centered at $\frac12$ of width less than or equal to 1, which is clearly a nested collection.  Also, note that the random set $\S$ is determined by a typical probability space $(\UU,\U,\prob_U)$ and a set-valued mapping, say, $\Gamma$, with $u \mapsto \Gamma(u) = \{u': |u'-\tfrac12| \leq |u-\tfrac12|\}$.  This matches the setup in \citet{dempster1967}, so condensability of the corresponding plausibility function follows from general theory; see \citet[][Sec.~V.F]{shafer1987} and the references therein.  For the contour function, 
\[ f_\S(u) = \prob_\S\{\S \ni u\} = \prob_U\{|U-\tfrac12| \geq |u-\tfrac12|\} = 1-|2u-1|. \]
Then condition \eqref{eq:natural.measure} clearly holds since the supremum is attained at $\Gamma(u)$, which has $\prob_U$-probability $|2u-1|$.  Therefore, $\S$ satisfies the conditions of Proposition~\ref{prop:prs.valid} and, hence, is valid.  However, validity of $\S$ can be shown directly by checking that its contour function above satisfies $f_\S(U) \sim \unif(0,1)$ for $U \sim \prob_U$.  \citet{imbasics} refer to \eqref{eq:default.prs} as the ``default'' predictive random set.  

The previous arguments can be generalized.  For example, let $\prob_U$ be a general non-atomic distribution on $\UU$ and take $h: \UU \to \RR$ to be a continuous function, constant only on $\prob_U$-null sets.  Then it follows similarly that the predictive random set $\S$, defined by 
\begin{equation}
\label{eq:gen.default.prs}
\S = \{u: h(u) \leq h(U)\}, \quad \text{with} \quad U \sim \prob_U, 
\end{equation}
is valid.  In many scalar parameter problems, by performing suitable auxiliary variable transformations, one can get $\UU=(0,1)$ and $\prob_U = \unif(0,1)$, so that the default predictive random set \eqref{eq:default.prs} can be used.  However, this more general construction of a valid predictive random set proves useful in cases where the auxiliary variable space $\UU$ is of dimension two or more; see Section~\ref{SS:lognormal}.

\section{Plausibility regions}
\label{S:plausibility}

\subsection{Plausibility functions for statistical inference}
\label{SS:inference}

Recall the auxiliary variable representation of the sampling model, i.e., $T=a(U,\theta)$, where $T$ is the statistic of interest, and $U \sim \prob_U$.  Let $T=t$ be the observed statistic.  If the auxiliary variable $U$ were also observed, say $U=u$, then the best possible inference on $\theta$ could be obtained, and would be represented by the set 
\[ \Theta_t(u) = \{\theta: t=a(u,\theta)\}. \]
This set could be a singleton, but need not be.  The idea is that \emph{if} the auxiliary variable were observed, then given $T=t$, one can solve for the parameter of interest, and $\Theta_t(u)$ is exactly this set of solutions.  In other words, $\Theta_t(u)$ defines a $t$-dependent compatibility relation \citep{shafer1987} on $\Theta$ and $\UU$.  

Since the auxiliary variable $U$ is not observable, it is not clear exactly how we should make use of the sets $\Theta_t(u)$.  In the classical Dempster--Shafer context, a belief function is defined on $\Theta$ by pushing the measure $\prob_U$ on $\UU$ forward through the $t$-dependent set-valued mapping $\Theta_t(\cdot)$, creating a new random set $\Theta_t(U)$, with $U \sim \prob_U$.  But as we indicated above in Section~\ref{SS:prs}, the Dempster--Shafer belief functions, generally, are not properly calibrated, and here is where the predictive random set $\S$ comes into play.  The validity property for $\S$ ensures that it will hit its target---a draw from $\prob_U$---with large $\prob_\S$-probability.  Therefore, we may push the measure $\prob_\S$, or its corresponding belief function forward, via the map $\Theta_t(\cdot)$, to obtain the bigger random set  
\begin{equation}
\label{eq:focal}
\Theta_t(\S) = \bigcup_{u \in \S} \Theta_t(u) 
\end{equation}
The intuition is that we expect $\Theta_t(\S)$ to contain the true $\theta$ with large $\prob_\S$-probability.  So, we understand $\Theta_t(\S)$ as a (random) set of ``reasonable'' guesses of $\theta$: for a given $A \subseteq \Theta$, if $\Theta_t(\S) \cap A \neq \varnothing$, then we cannot rule out the possibility that the true $\theta$ resides in $A$.  By the \emph{plausibility} of $A$ we mean the $\prob_\S$-probability that $\Theta_t(\S) \cap A \neq \varnothing$, 
\begin{equation}
\label{eq:plaus}
\pl_t(A) = \prob_\S\{\Theta_t(\S) \cap A \neq \varnothing\}, \quad A \subseteq \Theta.
\end{equation}
We shall refer to $\pl_t(A)$ as the plausibility function at $A$; though the notation does not reflect this, the reader should keep in mind that $\pl_t$ depends on $\prob_\S$.  

A few remarks about the above construction are in order.  First, we could have equivalently started by defining a belief function $\bel_t(A) = \prob_\S\{\Theta_t(\S) \subseteq A\}$ for the random set $\Theta_t(\S)$ and then the plausibility function $\pl_t(A) = 1-\bel_t(A^c)$.  We will not need the belief function in what follows, so the presentation here is more direct.  Second, the fact that the new random set in \eqref{eq:focal} is generally larger than that of the classical Dempster--Shafer analysis leads to smaller belief functions.  It is this squashing of the Dempster--Shafer belief function or, equivalently, the boosting of the plausibility function, that accounts for the improved calibration.  Indeed, as we show below, if the predictive random set is valid, then the squashing/boosting will be just enough to attain the desired calibration.  Third, the argument here for combining $\Theta_t(\cdot)$ with $\S$ as in \eqref{eq:focal} is just a special case of Dempster's rule of combination \citep{dempster1967, dempster2008}, though writing out the details formally perhaps does not provide any additional insight.  The key point is that Dempster's argument does not require that uncertainty on the $\UU$-space be summarized with a genuine probability measure.  In particular, the same line of reasoning applies if uncertainty on $\UU$ is described via a belief function, like in our present case.    

If $\prob_\S\{\Theta_t(\S) = \varnothing\} > 0$, then the formula \eqref{eq:plaus} must be adjusted.  To avoid such conditioning here, we assume that 
\begin{equation}
\label{eq:nonempty}
\Theta_t(u) \neq \varnothing \quad \text{for all $(t,u)$ pairs}.  
\end{equation}
This assumption essentially boils down to there being no non-trivial constraints on the parameter $\theta$ in the sampling model $\prob_{T|\theta}$.  An example of a non-trivial constraint is in a Poisson problem where the mean $\theta$ has a positive lower bound.  Most regular problems, including the examples in Section~\ref{S:examples}, satisfy \eqref{eq:nonempty}, though there are some that do not.  Assumption \eqref{eq:nonempty} is not necessary to construct plausibility regions with the desired coverage probabilities, but it will make our presentation easier.  The correction requires either conditioning on the event $\{\Theta_t(\S) \neq \varnothing\}$ or, preferably, stretching the predictive random set just enough to avoid the conflict \citep{leafliu2012}. 

For the important special case where $A=\{\theta\}$ is a singleton, we write $\pl_t(\theta) = \pl_t(\{\theta\}) = \prob_\S\{\Theta_t(\S) \ni \theta\}$.  Note that this special plausibility function is just the contour function \eqref{eq:contour} corresponding to the new nested random set $\Theta_t(\S)$.  This plausibility function also gives rise to the $100(1-\alpha)$\% \emph{plausibility region}:
\begin{equation}
\label{eq:plaus.region}
\P_\alpha(t) = \{\theta: \pl_t(\theta) > \alpha\}.  
\end{equation}
As we demonstrate below, if $\S$ is valid, then the plausibility region $\P_\alpha(T)$ has coverage probability at least $1-\alpha$ and, in many cases, equality is attained.  Perhaps more important than the coverage probability results is the fact that the plausibility regions are inferentially meaningful.  That is, for the given data, $\theta$ values inside the plausibility region are, in a certain sense, more plausible candidates for the true parameter value than $\theta$ values outside the plausibility region.

\subsection{Coverage probability results}
\label{SS:main}

The first result gives shows that $\pl_T(\theta)$, for $T \sim \prob_{T|\theta}$, is stochastically no smaller than $\unif(0,1)$ under mild conditions.  From this, plausibility region's advertised attainment of the nominal coverage coverage probability follows easily.  

\begin{prop}
\label{prop:valid}
Fix $\theta \in \Theta$.  If $\S$ is valid and \eqref{eq:nonempty} holds, then $\pl_T(\theta)$ is stochastically no smaller than $\unif(0,1)$ when $T \sim \prob_{T|\theta}$.  
\end{prop}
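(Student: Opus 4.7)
The plan is to reduce the claim, via the auxiliary-variable representation $T=a(U,\theta)$ with $U\sim\prob_U$, to the hypothesized validity of $\S$ (i.e., $f_\S(U)$ is stochastically no smaller than $\unif(0,1)$). So I want to rewrite the tail event $\{\pl_T(\theta)\le\alpha\}$ as an event depending on $U$, and bound its $\prob_U$-probability by $\alpha$.

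The central observation, which is really the only nontrivial step, is a pointwise lower bound $\pl_T(\theta)\ge f_\S(U)$. To see why I expect this to hold: by definition $\pl_T(\theta)=\prob_\S\{\Theta_T(\S)\ni\theta\}$, which means the $\prob_\S$-probability that there exists $u'\in\S$ with $a(u',\theta)=T$. But since $T=a(U,\theta)$ by the association, the ``candidate'' $u'=U$ already works whenever it happens to be caught by the random set, i.e., on the event $\{U\in\S\}$. Hence
\[
\{\omega':U\in\S(\omega')\}\subseteq\{\omega':\Theta_T(\S(\omega'))\ni\theta\},
\]
and taking $\prob_\S$ of both sides yields $\pl_T(\theta)\ge\prob_\S\{\S\ni U\}=f_\S(U)$, where on the right $U$ is treated as a fixed point and the randomness is over $\S$. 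Assumption \eqref{eq:nonempty} enters only to rule out degenerate cases where $\Theta_T(\S)$ could be empty and the plausibility calculation would have to be corrected.

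Once this inclusion is established, the rest is routine. I would pick $\alpha\in(0,1)$ arbitrary and argue
\[
\prob_{T|\theta}\{\pl_T(\theta)\le\alpha\}=\prob_U\{\pl_{a(U,\theta)}(\theta)\le\alpha\}\le\prob_U\{f_\S(U)\le\alpha\}\le\alpha,
\]
where the first equality is the auxiliary-variable representation, the second inequality is the pointwise bound $\pl_T(\theta)\ge f_\S(U)$, and the last inequality is exactly the validity of $\S$. Since $\alpha$ was arbitrary, $\pl_T(\theta)$ is stochastically no smaller than $\unif(0,1)$.

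The only real obstacle is being careful about the two independent sources of randomness: the ``true'' auxiliary variable $U$ driving $T$, and the predictive random set $\S$ used to compute the plausibility. The proof of the pointwise bound hinges on conditioning on $U$ and exploiting $\S\perp U$ to interpret $\prob_\S\{\S\ni U\}$ as $f_\S$ evaluated at the fixed point $U$. Everything else is bookkeeping; no measurability subtleties arise beyond those already handled in the standing assumptions on $\SS$ in Section~\ref{SS:prs}.
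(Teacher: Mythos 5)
Your proposal is correct and follows essentially the same route as the paper: both reduce $\pl_T(\theta)$ to the contour function evaluated at the auxiliary variable $U_T$ driving $T$ and then invoke validity of $\S$. The only difference is that the paper asserts the equivalence $\Theta_T(\S)\ni\theta$ if and only if $\S\ni U_T$ (hence the equality $\pl_T(\theta)=f_\S(U_T)$), whereas you establish only the inclusion $\{\S\ni U_T\}\subseteq\{\Theta_T(\S)\ni\theta\}$ and the resulting inequality $\pl_T(\theta)\ge f_\S(U_T)$, which is all that stochastic dominance requires and in fact sidesteps the implicit injectivity of $u\mapsto a(u,\theta)$ needed for the reverse implication.
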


\begin{proof}
From the alternative description of the sampling model $\prob_{T|\theta}$ in \eqref{eq:sample}, for $T \sim \prob_{T|\theta}$, there exists a corresponding $U_T \sim \prob_U$ such that $T = a(\theta,U_T)$.  Moreover, it follows easily from the definition of $\Theta_t(\S)$ that $\Theta_T(\S) \ni \theta$ if and only if $\S \ni U_T$.  Therefore, $\pl_T(\theta) = \prob_\S\{\Theta_T(\S) \ni \theta\} = \prob_\S\{\S \ni U_T\} = f_\S(U_T)$.  Since $\S$ is valid, $f_\S(U_T)$ is stochastically no smaller than $\unif(0,1)$, as a function of $U_T \sim \prob_U$, and so the claim follows.  
\end{proof}

There are two relevant results that can be derived from Proposition~\ref{prop:valid} and its proof.
\begin{itemize}
\item The first is that, for any $\alpha \in (0,1)$, the coverage probability of the plausibility region $\P_\alpha(T)$ in \eqref{eq:plaus.region} is at least $1-\alpha$, i.e., $\prob_{T|\theta}\{\P_\alpha(T) \ni \theta\} \geq 1-\alpha$ for all $\theta$.  To see this, note that $\prob_{T|\theta}\{\P_\alpha(T) \ni \theta\} = \prob_{T|\theta}\{\pl_T(\theta) > \alpha\}$.  By Proposition~\ref{prop:valid}, $\pl_T(\theta)$ is stochastically no smaller than $\unif(0,1)$.  This implies that the latter probability is no smaller than $\prob\{\unif(0,1) > \alpha\} = 1-\alpha$, hence the claim.
\vspace{-2mm}
\item Second, confidence regions can be constructed directly from $\Theta_t(\cdot)$ and the support sets $S \in \SS$ for the predictive random sets.  Indeed, for fixed $S \in \SS$, we know from the above proof that $\Theta_T(S) \ni \theta$ if and only if $U_T \in S$.  So, $\prob_{T|\theta}\{\Theta_T(S) \ni \theta\} = \prob_U(S)$, and if we select $S$ such that $\prob_U(S) = 1-\alpha$, then $\Theta_t(S)$ is a $100(1-\alpha)$\% confidence region for $\theta$.  Therefore, an alternative $100(1-\alpha)$\% confidence region construction selects the smallest $S$ with $\prob_U(S) = 1-\alpha$ and takes $\C_\alpha(t) =\Theta_t(S)$.   
\end{itemize}

An important question is, under what conditions, is the coverage probability exactly equal to $1-\alpha$ or, equivalently, when is $\pl_T(\theta)$, with $T \sim \prob_{T|\theta}$, exactly uniformly distributed?  It turns out that there are two conditions needed.  First, $T$ must have a continuous distribution $\prob_{T|\theta}$, otherwise $\pl_T(\theta)$ cannot be continuous.  Second, the predictive random set must be \emph{exact}, not just valid, i.e., $f_\S(U)$ must be exactly $\unif(0,1)$ for $U \sim \prob_U$.  This exactness property holds for the default predictive random set \eqref{eq:default.prs} and its generalized version \eqref{eq:gen.default.prs}.  Therefore, for problems with continuous $T$, if we choose an exact predictive random set, such as one of those in \eqref{eq:default.prs} or \eqref{eq:gen.default.prs}, then the plausibility region $\P_\alpha(T)$ has coverage probability exactly $1-\alpha$.

\section{Examples}
\label{S:examples}

\ifthenelse{1=1}{}{
\subsection{Exponential mean}

Let $Y_1,\ldots,Y_n$ be an iid sample from an exponential distribution with probability density function $p_\theta(x) = (1/\theta)e^{-x/\theta}$, $x \geq 0$, $\theta > 0$; in this parametrization, $\theta$ represents the mean of the distribution.  Here, $\theta$ is unknown and the goal is to construct a plausibility interval for $\theta$ based on the observable $X_1,\ldots,X_n$.  As a starting point, set $T=\sum_{i=1}^n X_i$, the minimal sufficient statistic for $\theta$.  Since $T$ has a gamma distribution with shape $n$ and scale $\theta$, an association is obtained by writing 
\[ T = F_{n,\theta}^{-1}(U), \quad U \sim \unif(0,1), \]
where $F_{n,\theta}$ is the corresponding gamma distribution function.  If we select the default predictive random set $\S$ in \eqref{eq:default.prs} for $U$, then, given $T=t$, the plausibility function can be readily written down in terms of this gamma distribution function, i.e., 
\[ \pl_t(\theta) = 1-|2 F_{n,\theta}(t)-1|, \quad \theta > 0. \]
Then the $100(1-\alpha)$\% plausibility interval $\P_\alpha(t)$ is given by 
\[ \{\theta: 1-|2 F_{n,\theta}(t)-1| > \alpha\} = \{\theta: \alpha/2 < F_{n,\theta}(t) < 1-\alpha/2\}. \]
Since $\theta$ is a scale parameter in $F_{n,\theta}$, i.e., $F_{n,\theta}(t) = F_{n,1}(t/\theta)$, $\P_\alpha(t)$ can be written as a genuine interval:
\[ \P_\alpha(t) = \Bigl( \frac{t}{\gamma_{n,1}(1-\frac{\alpha}{2})}, \, \frac{t}{\gamma_{n,1}(\frac{\alpha}{2})} \Bigr), \]
where $\gamma_{n,1}(q) = F_{n,1}^{-1}(q)$ is the $q$th quantile of the gamma distribution with shape $n$ and scale 1.  It follows from the general theory that $\P_\alpha(t)$ has exact coverage probability $1-\alpha$.  This interval also happens to be the confidence interval obtained by inverting the exact size-$\alpha$ likelihood ratio test. 
}

\subsection{Power law process}
\label{SS:power}

Consider a continuous time non-homogenous Poisson process $\{N_y: y \geq 0\}$, where the mean function $m(y) = \E(N_y)$ satisfies $m(y) = \psi y^\theta$, for $\psi, \theta > 0$.  Such a process is called a power law process \citep[e.g.,][]{gaudoin.yang.xie.2006}.  The parameter $\psi$ is a scale parameter and $\theta$ is a shape parameter.   Though both $\psi$ and $\theta$ are unknown, the goal here is to construct a plausibility interval for $\theta$ based on $n$ observed event times $Y_1 \leq \cdots \leq Y_n$.  

For these data, the log-likelihood function for $(\psi,\theta)$ looks like 
\[ \ell(\psi,\theta) = n\log\psi + n\log\theta + (\theta-1) \sum_{i=1}^n \log Y_i - \psi Y_n^\theta. \]
By the Neyman--Fisher factorization theorem, a joint sufficient statistic for $(\psi,\theta)$ is the pair $(\sum_{i=1}^n \log Y_i, Y_n)$.  This sufficient statistic is a one-to-one transformation of the maximum likelihood estimator $(\hat\psi,\hat\theta)$, given by 
\[ \hat\theta = \frac{n}{\sum_{i=1}^{n-1} \log(Y_n / Y_i)} \quad \text{and} \quad \hat\psi = n / Y_n^{\hat\theta}. \]
Therefore, $(\hat\psi,\hat\theta)$ is a minimal sufficient statistic for $(\psi, \theta)$.  Moreover, for all $\psi$, the vector $\{\log(Y_n/Y_{n-1}), \ldots, \log(Y_n / Y_1) \}$ is distributed as a sorted sample of $n-1$ independent random variables from an exponential distribution with mean $1/\theta$ \citep[e.g.,][]{crow1974}.  For simplicity, we take $T = n / \hat\theta$, which has a gamma distribution with shape $n-1$ and scale $1/\theta$.  Some information about $\theta$ is lost by ignoring the $\hat\psi$ component of the joint minimal sufficient statistic, but the required marginalization strategy is beyond our present scope; see \citet{immarg}.  So, we shall consider here the simple association 
\[ T = F_{n-1,1/\theta}^{-1}(U), \quad U \sim \unif(0,1), \]
where $F_{n-1,1/\theta}$ denotes the gamma distribution function with shape $n-1$ and scale $1/\theta$.  If we use the default predictive random set $\S$ in \eqref{eq:default.prs}, then the plausibility function is
\[ \pl_t(\theta) = 1-|2F_{n-1,1/\theta}(t) - 1|, \quad \theta > 0, \]
which can be readily evaluated numerically.  Then the $100(1-\alpha)$\% plausibility interval $\P_\alpha(t)$ for $\theta$ is given by 
\[ \P_\alpha(t) = \{\theta: \pl_t(\theta) > \alpha\} = \{\theta: \alpha/2 < F_{n-1,1/\theta}(t) < 1-\alpha/2\}. \] 
Since $1/\theta$ is a scale parameter in $F_{n-1,1/\theta}(t)$, the right-hand side above can be rewritten as $\{\theta: \alpha/2 < F_{n-1,1}(\theta t) < 1-\alpha/2\}$.  Therefore, if we let $\gamma_{n-1,1}(q)$ denote the $q$th quantile of the gamma distribution with shape $n-1$ and scale 1, then the plausibility interval can be written as a genuine interval, 
\[ \P_\alpha(t) = \Bigl( \frac{\gamma_{n-1,1}(\frac{\alpha}{2})}{t}, \, \frac{\gamma_{n-1,1}(1-\frac{\alpha}{2})}{t} \Bigr). \]
This is equivalent to the exact confidence interval given in equation (6) of \citet{gaudoin.yang.xie.2006} in terms of chi-square quantiles.

\subsection{Exponential regression through the origin}
\label{SS:reg}

Consider a special case of an exponential log-linear model, where $Y_1,\ldots,Y_n$ are independent exponential random variables and $Y_i$ has mean $e^{\theta x_i}$, $i=1,\ldots,n$, for fixed covariates $x_1,\ldots,x_n$.   The goal is to produce a plausibility interval for the slope parameter $\theta$.  

The log-likelihood function for $\theta$ looks like 
\[ \ell(\theta) = -\sum_{i=1}^n (\theta x_i + e^{\log Y_i - \theta x_i} ), \]
and the likelihood equation is given $\sum_{i=1}^n (e^{\log Y_i - \theta x_i} - 1) x_i = 0$.  Let $T$ be the solution to this equation, the maximum likelihood estimator of $\theta$.  If $G_\theta = G_{\theta, x, n}$ is the distribution function of $T$, then a suitable association is, again, given by 
\[ T = G_\theta^{-1}(U), \quad U \sim \unif(0,1). \]
The distribution function $G_\theta$ is not available in closed form, but it can be evaluated via Monte Carlo.  If the default predictive random set $\S$ in \eqref{eq:default.prs} is used for $U$, then again the plausibility function for $\theta$ is 
\[ \pl_t(\theta) = 1-|2 G_\theta(t) - 1|, \quad \theta \in \RR. \]
No expressions are available for the plausibility function in this case, but, again, it is relatively easy to evaluate numerically via Monte Carlo.  

For illustration, Figure~\ref{fig:reg} displays plots of the plausibility function $\pl_t(\theta)$, as a function of $\theta$, for two simulated data sets, one of size $n=10$, the other of size $n=20$.  Here the covariate values are $x_i=i$, $i=1,\ldots,n$, and the true parameter value is $\theta=1$.  This function is evaluated by a Monte Carlo integration step performed at each point $\theta$ on the horizontal axis.  For comparison, the endpoints of the 95\% confidence interval based on asymptotic normality of the maximum likelihood estimator are also displayed.  In both cases, the two intervals are comparable, which is to be expected.  However, for such small $n$, it is unlikely that the asymptotic normality has kicked in, so the actual coverage probability of the latter is likely different from the target 0.95.  The plausibility interval, on the other hand, has coverage probability exactly equal to 0.95 based on the theory developed in Section~\ref{SS:main}.  Indeed, in a simulation of 5000 data sets of size $n=10$, under same setup as above, the estimated coverage probabilities for the exact plausibility interval and asymptotic confidence interval are 0.951 and 0.934, respectively.  

\ifthenelse{1=1}{}{
> er.pl.sim(1, 10, 2500, 5000)
      Coverage   Length
Plaus   0.9510 0.000000
Likhd   0.9344 0.201156
}

\begin{figure}
\begin{center}
\subfigure[$n=10$]{\scalebox{0.6}{\includegraphics{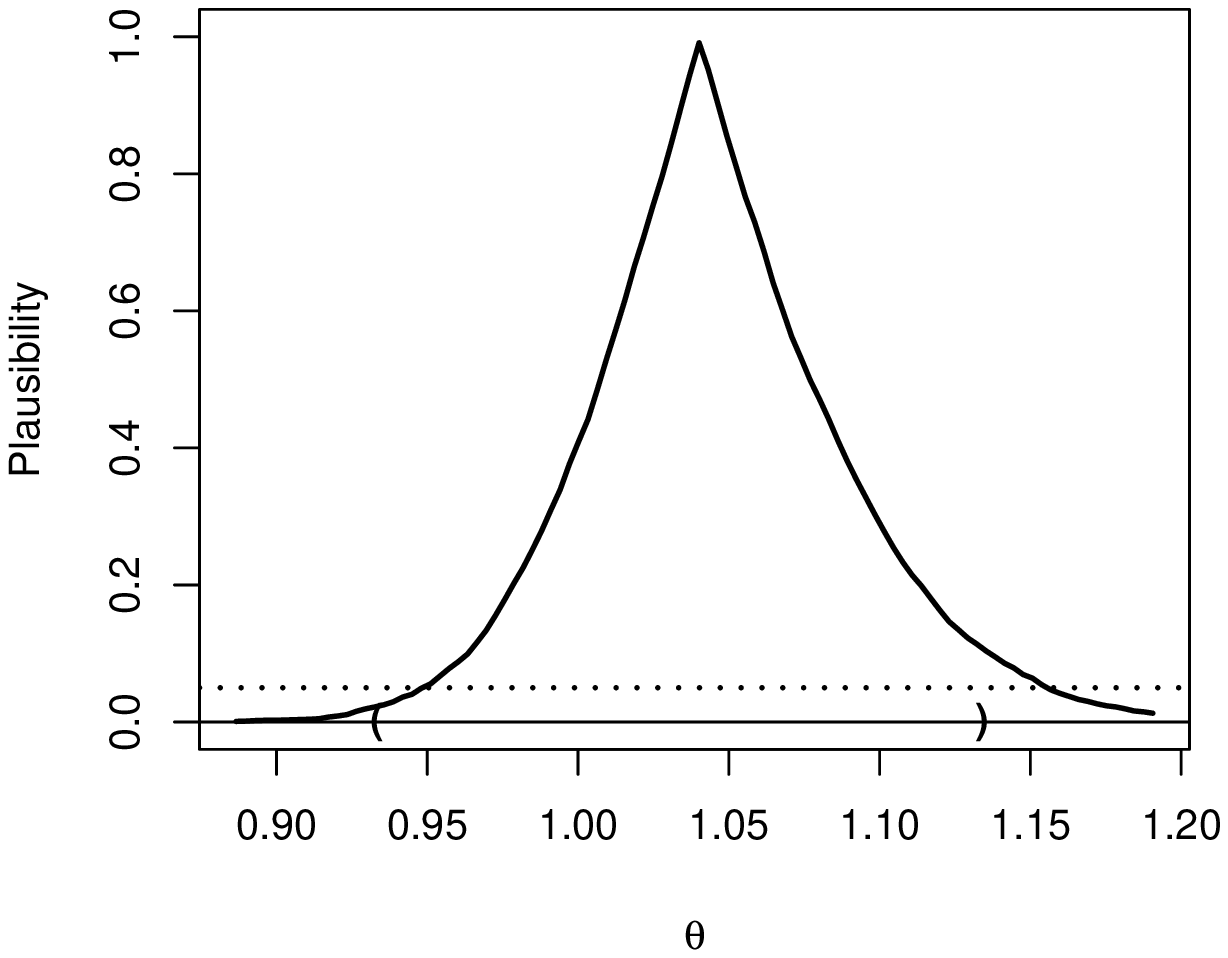}}}
\subfigure[$n=20$]{\scalebox{0.6}{\includegraphics{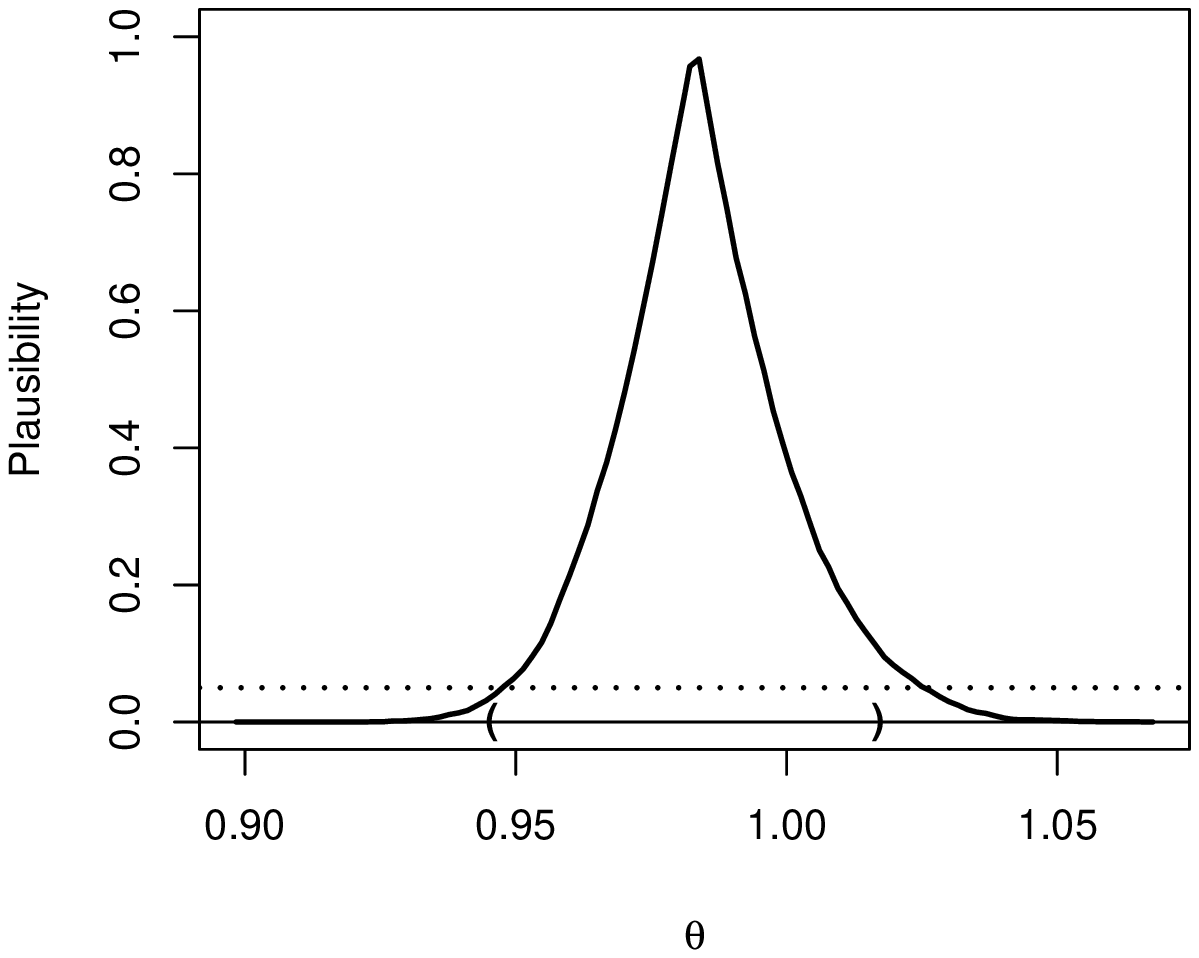}}}
\end{center}
\caption{Plausibility functions $\pl_t(\theta;\S)$ versus $\theta$ in Section~\ref{SS:reg}.  Parentheses on the $\theta$-axis mark the endpoints of the 95\% confidence interval based on asymptotic normality of the maximum likelihood estimator.  Horizontal line at $\alpha=0.05$ determines the endpoints of the 95\% plausibility interval.}
\label{fig:reg}
\end{figure}

\subsection{Lognormal model}
\label{SS:lognormal}

A useful two-parameter model for lifetime data is lognormal, i.e., a normal distribution is used to model the log lifetimes.  Here, for simplicity, we will work directly with the transformed data.  That is, the starting point is an independent sample $Y_1,\ldots,Y_n$ from a normal distribution $\nm(\mu,\sigma^2)$, where $Y_i$ is the log of the $i$th lifetime, and the goal is inference on $\theta=(\mu,\sigma^2)$.  The analysis that follows can be extended to the regression case without too much difficulty; the only challenge is dealing with dependence introduced by non-orthogonal predictors.  Some remarks on how the normality assumption can be removed are given at the end of this subsection.  

This simple model admits a well known joint minimal sufficient statistic, namely, $T=(T_1,T_2)=(\bar Y, \sum_{i=1}^n (Y_i-\bar Y)^2)$ and, moreover, the two entries are statistically independent, with $T_1 \sim \nm(\mu, \sigma^2/n)$ and $T_2 \sim \sigma^2 \chisq(n-1)$.  Let $U=(U_1,U_2)$ be a pair of independent $\unif(0,1)$ auxiliary variables, and write the relation $T=a(U,\theta)$ in \eqref{eq:sample} as  
\[ T_1 = \mu + \sigma n^{-1/2} F_1^{-1}(U_1), \quad T_2 = \sigma^2 F_2^{-1}(U_2), \]
where $F_1$ and $F_2$ are the $\nm(0,1)$ and $\chisq(n-1)$ distribution functions, respectively.  For the predictive random set $\S$, we propose the box:
\[ \S=\{(u_1,u_2): \max(|u_1-\tfrac12|,|u_2-\tfrac12|) \leq \max(|U_1-\tfrac12|,|U_2-\tfrac12|)\}, \]
where $U_1,U_2$ are independent $\unif(0,1)$.  This is a two-dimensional version of the default predictive random set in \eqref{eq:default.prs} for a single uniform auxiliary variable, and also a special case of \eqref{eq:gen.default.prs} with $h(u_1,u_2) = \max(|u_1-\tfrac12|,|u_2-\tfrac12|)$.  Note that one should not take the predictive random set $\S$ for the pair $(U_1,U_2)$ to be the Cartesian product of two independent default predictive random sets---the support of the latter will not be nested in the sense \eqref{eq:nested}.  Now, for given $T=t$, it is straightforward to write down the plausibility function for $\theta=(\mu,\sigma^2)$:
\[ \pl_t(\mu,\sigma^2) = 1 - \max\Bigl\{ \Bigl|2 F_1\Bigl(\frac{t_1-\mu}{\sigma n^{-1/2}} \Bigr)-1 \Bigr|, \Bigl|2 F_2\Bigl(\frac{t_2}{\sigma^2}\Bigr)-1 \Bigr| \Bigr\}^2. \]
Then the $100(1-\alpha)$\% plausibility region is the just the $\alpha$-level set for $\pl_t$.  

For illustration, Figure~\ref{fig:lnpl} plots the corresponding 90\% confidence regions for three methods: the plausibility region described above, the elliptical region based on the asymptotic normality of the maximum likelihood estimator of $(\mu,\sigma^2)$, and a naive confidence region obtained by taking the Cartesian product of standard confidence intervals for $\mu$ and $\sigma^2$ individually, with Bonferroni correction.  This is based on an independent sample of size $n=25$ from $\nm(0,1)$.  The plausibility region has an unusual trapezoidal shape, and appears to be larger than the maximum likelihood regions.  To check the efficiency, a small simulation was conducted, based on 5000 Monte Carlo samples, and we found that the coverage probabilities for the 90\% plausibility, maximum likelihood, and naive confidence regions were 0.899, 0.841, and 0.904, respectively.  So, apparently, the maximum likelihood region's coverage probability is well below the target 0.90.  

\begin{figure}
\begin{center}
\scalebox{0.6}{\includegraphics{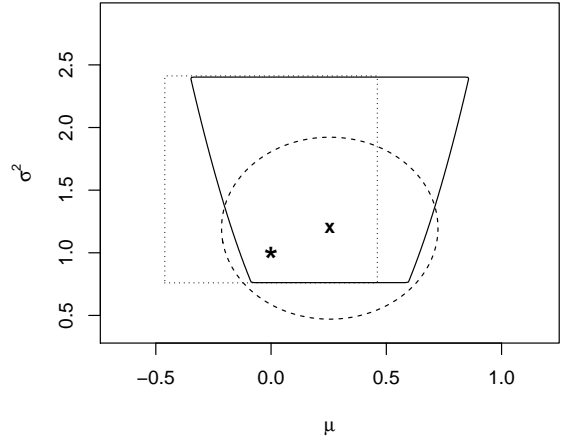}}
\end{center}
\caption{90\% confidence regions for $(\mu,\sigma^2)$ in the lognormal example in Section~\ref{SS:lognormal}.  Solid line is the plausibility region, dashed line is the maximum likelihood region, and dotted line is the naive Cartesian product region.  Asterisk marks the true $(\mu,\sigma^2)$ and {\sf x} marks the maximum likelihood estimate.}
\label{fig:lnpl}
\end{figure}

What makes the normal, or lognormal, distribution special in this case is the initial reduction to a two-dimensional joint minimal sufficient statistic.  Many other distributions do not admit such a reduction.  For example, suppose $Y_1,\ldots,Y_n$ are independent, with distribution function $F(\frac{y-\mu}{\sigma})$, where $\mu$ and $\sigma$ are location and scale parameters, respectively.  In this context of lifetime distributions, $Y_i$ might be the log lifetime of the  $i$th observation and $F$ could be a logistic distribution function; in this case, the lifetime distribution is log logistic.  But the logistic distribution does not admit a dimension reduction via sufficiency.  However, one can always write the model as 
\[ Y_i = \mu + \sigma F^{-1}(U_i), \quad i=1,\ldots,n, \]
where $U_1,\ldots,U_n$ are independent $\unif(0,1)$, and follow the procedure outlined above, with the box predictive random set, to get a plausibility function 
\[ \pl_y(\mu,\sigma) = 1 - \Bigl\{ \max_{1 \leq i \leq n} \Bigl|2F\Bigl(\frac{y_i-\mu}{\sigma}\Bigr) - 1 \Bigr| \Bigr\}^n. \]
Unlike the normal case above, the plausibility region in this case will not be efficient, due to the fact that there are $n$ auxiliary variables being predicted for only two parameters.  Some reduction would be possible via conditioning, but the details would be rather involves; see \citet{imcond}.

\section{Discussion}
\label{S:discuss}

In this paper, we discuss a new approach for the construction of confidence regions based on the theory of random sets.  The key result is that if the predictive random set $\S$ for the unobservable auxiliary variable $U$ is valid, in the sense that it misses its its target not too often, then the corresponding plausibility region has at least the nominal coverage probability.  It is important that this validity result is not asymptotic and, moreover, does not depend on any characteristic of the problem that is unknown.  Therefore, it is generally quite easy to specify a valid predictive random set, and a default choice is given here and used in several examples to obtain practically useful results.  

Here the focus was on simplicity rather than generality.  Though the two examples involved only scalar parameter, essentially the same strategy would apply for a multi-parameter problem.  A challenging problem in multi-parameter situations is to give an exact confidence region for some component or, more generally, some scalar-valued function of the full parameter.  This was the actual setup in the power-law process example in Section~\ref{SS:power}, though we sidestepped the main difficulty by ignoring a part of the minimal sufficient statistic.  To incorporate all the information in the minimal sufficient statistic requires some careful manipulations which were beyond the present scope.  A new and detailed look at such problems is given \citet{immarg}.  

The primary goal here was to construct confidence regions that attain the nominal coverage probability.  We found that, in many cases, including the two examples in Section~\ref{S:examples}, the plausibility regions will actually hit this target on the nose.  A natural follow-up question is if these plausibility regions are ``optimal'' in some sense, i.e., do the plausibility regions have smallest average size, say, among all those regions that hit the desired coverage probability?  This question is the focus of ongoing investigations.

\section*{Acknowledgments}

The author is thankful for comments and suggestions given by Chuanhai Liu and an anonymous referee.  This work is partially supported by the U.S.~National Science Foundation, DMS--1208833.

\bibliographystyle{apa}
\bibliography{/Users/rgmartin/Dropbox/Research/mybib}

\end{document}